\newtheorem{theorem}{Theorem}
\newtheorem{lemma}{Lemma}
\newtheorem{definition}{Definition}
\newtheorem{remark}{Remark}
\newtheorem{corollary}{Corollary}
\def\@pacs{}
\def\pacs#1{}
\def\preprint#1{}
\renewcommand\@pacs@name{}
\begin{document}

\title{\bfseries Essential Self-Adjointness of the Geometric Deformation Operator \\ on a Compact Interval}

\author{Anton Alexa}
\email{mail@antonalexa.com}
\thanks{ORCID: \href{https://orcid.org/0009-0007-0014-2446}{0000-0007-0014-2446}}
\affiliation{Independent Researcher, Chernivtsi, Ukraine}
\date{\today}

\begin{abstract}
\noindent
We define a second-order differential operator \( \hat{C} \) on the Hilbert space \( L^2([-v_c, v_c]) \), constructed from a smooth deformation function \( C(v) \). The operator is considered on the Sobolev domain \( H^2([-v_c, v_c]) \cap H^1_0([-v_c, v_c]) \) with Dirichlet boundary conditions. We prove that \( \hat{C} \) is essentially self-adjoint by verifying its symmetry and computing von Neumann deficiency indices, which vanish. All steps are carried out explicitly. This result ensures the mathematical consistency of the operator and enables future spectral analysis on compact intervals.
\end{abstract}

\maketitle
\thispagestyle{empty}

\section{INTRODUCTION}

The deformation function \( C(v) = \pi(1 - v^2 / c^2) \), introduced in \cite{alexa2025-flow}, defines a real-analytic, even function on the open interval \( (-c, c) \), where \( v_c \) is defined by the condition \( C(v_c) = 1 \), given explicitly as \( v_c = c \sqrt{1 - \frac{1}{\pi}} \approx 0.8257\,c \). To ensure the analysis is restricted to geometrically admissible configurations, we consider the interval \( [-v_c, v_c] \), where \( C(v) \geq 1 \), instead of the full physical range \( [-c, c] \). This restriction is not only mathematically convenient but also physically motivated, as it ensures that the deformation remains bounded and avoids singular behavior at the speed of light. The critical velocity \( v_c \) thus defines a natural cutoff for the geometric deformation. This function characterizes a scalar geometric deformation associated with motion, attaining its maximum value \( \pi \) at \( v = 0 \) and vanishing at \( v = \pm c \). Its structure is invariant under reflections \( v \mapsto -v \), and it satisfies \( C(0) = \pi \), \( C(\pm c) = 0 \). The bounded domain and analytic regularity of \( C(v) \) allow for a consistent operator-theoretic formulation. In this work, we define a second-order differential operator \( \hat{C} \) acting in the Hilbert space \( L^2([-v_c, v_c]) \), constructed by formally replacing the variable \( v \) with the differential operator \( \hat{v} := -i\hbar\, d/dv \), yielding \( \hat{C} = \pi\left(1 + \frac{\hbar^2}{c^2} \frac{d^2}{dv^2} \right) \). We consider this operator on the Sobolev domain \( H^2([-v_c, v_c]) \cap H^1_0([-v_c, v_c]) \) under Dirichlet boundary conditions. Our goal is to prove that \( \hat{C} \) is essentially self-adjoint. The analysis proceeds via explicit computation of deficiency indices and verification of operator symmetry. The result confirms the mathematical well-posedness of \( \hat{C} \) as a symmetric operator with a unique self-adjoint extension, in the sense of classical functional analysis \cite{reed-simon, moretti, zettl}. The present analysis establishes the essential self-adjointness of the operator \( \hat{C} \) on a compact interval, independently of any spectral or dynamical interpretation. While the deformation function originates from geometric considerations discussed in \cite{alexa2025-flow}, the results obtained here are purely analytic and self-contained. These results establish a rigorous foundation for subsequent spectral analysis of the deformation operator.

\section{OPERATOR DEFINITION AND PRELIMINARIES}

In this section, we formulate the operator \( \hat{C} \) associated with the analytic deformation function \( C(v) = \pi(1 - v^2 / c^2) \), introduced in \cite{alexa2025-flow}. The variable \( v \) is treated not as a velocity parameter but as a real variable on a fixed compact interval, and the function \( C(v) \) serves as a generating form for operator construction. The aim is to define a symmetric differential operator acting in the Hilbert space \( L^2([-v_c, v_c]) \), suitable for spectral analysis. We present the precise expression of \( \hat{C} \), identify its natural domain, and verify the basic analytic properties that will be used in the proof of self-adjointness.

\subsection{Definition of the Operator \texorpdfstring{\( \hat{C} \)}{C}}

We begin by promoting the classical expression \( C(v) = \pi(1 - v^2 / c^2) \) to an operator form by replacing the variable \( v \) with the symmetric differential operator
\begin{equation}
    \hat{v} := -i\hbar\, \frac{d}{dv},
    \label{eq:canonical_v_operator}
\end{equation}
where \( \hbar > 0 \) and \( c > 0 \) are fixed parameters introduced for dimensional consistency. This yields the second-order differential operator
\begin{equation}
    \hat{C} := \pi\left(1 - \frac{\hat{v}^2}{c^2} \right) = \pi\left(1 + \frac{\hbar^2}{c^2} \frac{d^2}{dv^2} \right),
    \label{eq:C_operator_definition}
\end{equation}

which acts on complex-valued functions defined on the interval \( [-v_c, v_c] \), where \( v_c \) is defined by the condition \( C(v_c) = 1 \).

This defines a regular Sturm-Liouville operator on a finite interval with smooth coefficients and Dirichlet boundary conditions, ensuring closedness and the absence of deficiency indices \cite{zettl, reed-simon}.

\begin{definition}[Geometric Deformation Operator]
Let \( \hat{C} \) be the differential operator defined by \eqref{eq:C_operator_definition}, acting in the Hilbert space \( L^2([-v_c, v_c]) \). We define its domain by
\begin{equation}
    D(\hat{C}) := \left\{ \psi \in H^2([-v_c, v_c]) \,\middle|\, \psi(-v_c) = \psi(v_c) = 0 \right\},
    \label{eq:C_operator_domain}
\end{equation}
where \( H^2([-v_c, v_c]) \) is the standard Sobolev space of square-integrable functions with square-integrable second derivative. The boundary conditions are imposed to ensure that \( \hat{C} \) is symmetric on \( D(\hat{C}) \).
\end{definition}

The space \( D(\hat{C}) \) coincides with the intersection \( H^2([-v_c, v_c]) \cap H^1_0([-v_c, v_c]) \), and is dense in \( L^2([-v_c, v_c]) \). The operator \( \hat{C} \) is thus well-defined as an unbounded operator with domain \( D(\hat{C}) \subset L^2 \). The imposition of Dirichlet boundary conditions is crucial: it ensures that the integration-by-parts formulas needed for symmetry are valid without boundary terms. This choice also precludes the appearance of boundary terms and ensures that the operator is closed on its domain, as discussed in \cite{reed-simon, moretti, zettl}.

\begin{remark}[Normalization Constants]
The constants \( \hbar \) and \( c \) appear in \eqref{eq:C_operator_definition} as scaling parameters. Their presence preserves dimensional balance in the definition of \( \hat{C} \), but they do not affect the structural properties of the operator. The analysis remains valid under arbitrary positive rescaling of these constants, and in many cases one may set \( \hbar = c = 1 \) without loss of generality.
\end{remark}

\subsection{Basic Properties}

Before proceeding to the core proof of essential self-adjointness, we clarify the analytic and algebraic structure of the operator \( \hat{C} \), as defined in \eqref{eq:C_operator_definition} with domain \( D(\hat{C}) \) from \eqref{eq:C_operator_domain}. The precise behavior of \( \hat{C} \) under conjugation, its linearity, and compatibility with the Hilbert space framework are central to the analysis that follows. In particular, we verify that \( \hat{C} \) is a well-defined unbounded operator on a dense subspace of \( L^2([-v_c, v_c]) \), and that it satisfies formal symmetry with respect to the standard inner product. These elementary but essential features underpin the application of von Neumann’s criterion in the next sections.

Since \( \hat{C} = \pi\left(1 - \frac{\hat{v}^2}{c^2}\right) \) and \( \hat{v}^2 := -\hbar^2 \frac{d^2}{dv^2} \), the operator is explicitly given by
\begin{equation}
\hat{C} := \pi\left(1 + \frac{\hbar^2}{c^2} \frac{d^2}{dv^2}\right)
\end{equation}
on its natural domain \( D(\hat{C}) = H^2([-v_c, v_c]) \cap H^1_0([-v_c, v_c]) \).

\begin{lemma}[Linearity and Denseness]\label{lem:linearity}
The operator \( \hat{C} \) is linear on its domain \( D(\hat{C}) \), and the domain is dense in \( L^2([-v_c, v_c]) \).
\end{lemma}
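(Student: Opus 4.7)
The plan is to handle the two claims separately, since both reduce to standard facts about Sobolev spaces on a compact one-dimensional interval. For linearity, I would decompose $\hat{C} = \pi I + (\pi\hbar^2/c^2)\,d^2/dv^2$ and note that each summand acts linearly on distributions and hence on $H^2([-v_c, v_c])$. I would then observe that $D(\hat{C})$ is itself a linear subspace of $L^2$: it is the intersection of the vector space $H^2([-v_c, v_c])$ with the kernel of the boundary-trace map $\psi \mapsto (\psi(-v_c), \psi(v_c))$, which is itself linear. Linearity of $\hat{C}$ on $D(\hat{C})$ then follows by distributing sums and scalars through the derivative.

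For denseness, I would invoke the classical fact that $C_c^\infty((-v_c, v_c))$, the space of compactly supported smooth functions on the open interval, is dense in $L^2([-v_c, v_c])$. Any such function is smooth and vanishes identically in a neighbourhood of each endpoint, so it automatically belongs to $H^2([-v_c, v_c])$ and satisfies the Dirichlet conditions; therefore $C_c^\infty((-v_c, v_c)) \subset D(\hat{C})$. Combined with the inclusion $D(\hat{C}) \subset L^2([-v_c, v_c])$ and the density of the smaller space, this forces $D(\hat{C})$ itself to be dense in $L^2([-v_c, v_c])$.

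There is no genuine analytical obstacle here; the lemma is entirely structural. The single point worth flagging explicitly is that the pointwise boundary conditions $\psi(\pm v_c) = 0$ are meaningful for elements of $H^2$ by the Sobolev embedding $H^2([-v_c, v_c]) \hookrightarrow C^1([-v_c, v_c])$, which is available on a bounded one-dimensional interval. This embedding both legitimises the definition of $D(\hat{C})$ and guarantees that $D(\hat{C})$ is closed under linear combinations. Once it is cited, both conclusions follow directly from textbook results without any further machinery.
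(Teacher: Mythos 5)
Your argument is correct and follows essentially the same route as the paper: linearity is structural, and denseness is obtained from the inclusion \( C_c^\infty((-v_c, v_c)) \subset D(\hat{C}) \) together with the density of that space in \( L^2([-v_c, v_c]) \). Your additional remark that the Sobolev embedding \( H^2 \hookrightarrow C^1 \) legitimises the pointwise boundary conditions is a worthwhile detail the paper leaves implicit, but it does not change the argument.
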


\begin{proof}
The space \( H^2([-v_c, v_c]) \cap H^1_0([-v_c, v_c]) \) is a Hilbert space, and the application of \( d^2/dv^2 \) is a linear operation. Since \( C^\infty_c(-v_c, v_c) \subset D(\hat{C}) \), and this space is dense in \( L^2([-v_c, v_c]) \), it follows that \( D(\hat{C}) \) is dense as well.
\end{proof}

\begin{lemma}[Formal Symmetry]\label{lem:symmetry}
The operator \( \hat{C} \) is formally symmetric on \( D(\hat{C}) \subset L^2([-v_c, v_c]) \); that is,
\begin{equation}
\langle \hat{C} \psi, \phi \rangle = \langle \psi, \hat{C} \phi \rangle \quad \text{for all } \psi, \phi \in D(\hat{C}).
\end{equation}
\end{lemma}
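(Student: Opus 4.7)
The plan is to reduce the claim to two applications of integration by parts and to verify that the relevant boundary terms vanish by the Dirichlet conditions built into $D(\hat{C})$. First I would split the operator into its two summands, writing
\begin{equation}
\langle \hat{C}\psi, \phi\rangle = \pi\langle \psi, \phi\rangle + \pi\frac{\hbar^2}{c^2}\Big\langle \frac{d^2\psi}{dv^2}, \phi\Big\rangle,
\end{equation}
and similarly for $\langle \psi, \hat{C}\phi\rangle$. The multiplicative part $\pi\langle \psi,\phi\rangle$ is manifestly Hermitian since $\pi$ is real, so the whole problem collapses to showing that the second-derivative term $-\hbar^2/c^2 \cdot d^2/dv^2$ is symmetric on $D(\hat{C})$.

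The key step is then to establish the identity
\begin{equation}
\int_{-v_c}^{v_c} \psi''(v)\,\overline{\phi(v)}\,dv = \int_{-v_c}^{v_c} \psi(v)\,\overline{\phi''(v)}\,dv
\end{equation}
for $\psi,\phi \in H^2([-v_c,v_c])\cap H^1_0([-v_c,v_c])$. I would carry out integration by parts twice, producing the boundary contributions $[\psi'\overline{\phi}]_{-v_c}^{v_c}$ and $[-\psi\,\overline{\phi'}]_{-v_c}^{v_c}$. Both of these vanish because $\psi$ and $\phi$ satisfy the Dirichlet condition $\psi(\pm v_c) = \phi(\pm v_c) = 0$.

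The mildly subtle point, which I would address explicitly, is the justification of pointwise boundary evaluation and of the integration-by-parts formula for merely $H^2$ functions rather than $C^2$ functions. In one dimension this is standard: by the Sobolev embedding $H^2([-v_c,v_c]) \hookrightarrow C^1([-v_c,v_c])$, each element of $D(\hat{C})$ has a continuous representative together with a continuous first derivative, so $\psi(\pm v_c)$ and $\psi'(\pm v_c)$ are well defined. The integration-by-parts formula for $H^1$ and $H^2$ functions on a bounded interval (or equivalently density of $C^\infty([-v_c,v_c])$ in $H^2([-v_c,v_c])$) then yields the identity above without requiring classical smoothness.

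I do not expect any serious obstacle: the whole argument is a formal symmetry computation, and the only nontrivial ingredient is the Sobolev embedding that makes the Dirichlet trace well defined. Combining the two steps above gives $\langle \hat{C}\psi,\phi\rangle = \langle \psi,\hat{C}\phi\rangle$ on $D(\hat{C})$, completing the proof.
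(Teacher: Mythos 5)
Your proof is correct and follows essentially the same route as the paper: split off the multiplicative part and handle the second-derivative term by integration by parts, with the Dirichlet conditions killing the boundary contributions. The only cosmetic difference is that you integrate by parts twice on one side while the paper integrates by parts once on each side and compares the resulting symmetric expression $-\int\psi'\overline{\phi'}$; your explicit appeal to the embedding $H^2([-v_c,v_c])\hookrightarrow C^1([-v_c,v_c])$ to justify the boundary evaluations is a welcome extra detail the paper leaves implicit.
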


\begin{proof}
Let \( \psi, \phi \in D(\hat{C}) \). Then:
\begin{align}
\langle \hat{C} \psi, \phi \rangle &= \pi \int_{-v_c}^{v_c} \left( \psi(v) + \frac{\hbar^2}{c^2} \psi''(v) \right) \overline{\phi(v)} \, dv \notag \\
&= \pi \int_{-v_c}^{v_c} \psi(v) \overline{\phi(v)} \, dv + \pi \frac{\hbar^2}{c^2} \int_{-v_c}^{v_c} \psi''(v) \overline{\phi(v)} \, dv.
\end{align}
Applying integration by parts:
\begin{align}
\int_{-v_c}^{v_c} \psi''(v) \overline{\phi(v)} \, dv
&= \left[ \psi'(v)\overline{\phi(v)} \right]_{-v_c}^{v_c} - \int_{-v_c}^{v_c} \psi'(v) \overline{\phi'(v)} \, dv.
\end{align}
Since \( \phi(\pm v_c) = 0 \), the boundary term vanishes. Analogously, we compute \( \langle \psi, \hat{C} \phi \rangle \) and obtain the same expression. Hence the operator is formally symmetric.

For general background on symmetric second-order differential operators with Dirichlet boundary conditions in Sobolev spaces, see \cite[Theorem X.1–X.2]{reed-simon}, \cite[§5.3]{zettl}.
\end{proof}

\subsubsection*{Scaling Structure and Physical Parameters}
Although the constants \( \hbar \) and \( c \) appear in the definition of \( \hat{C} \), they serve purely as scaling factors. They ensure dimensional consistency but do not influence the operator’s analytic behavior. The symmetry, domain properties, and self-adjointness are preserved under arbitrary positive rescaling, consistent with standard treatments \cite{reed-simon, moretti}.

\begin{corollary}
The operator \( \hat{C} \) is a densely defined, formally symmetric, linear operator in \( L^2([-v_c, v_c]) \).
\end{corollary}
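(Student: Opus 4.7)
The plan is to assemble the three claimed properties, namely density of the domain, linearity, and formal symmetry, directly from the two lemmas established just above, so no new analytic work is required. First I would invoke Lemma~\ref{lem:linearity} to record that $D(\hat{C})$ is dense in $L^2([-v_c, v_c])$ and that the map $\psi \mapsto \hat{C}\psi$ is linear on this domain; together these two facts are exactly the statement that $\hat{C}$ is a densely defined linear operator in the Hilbert space $L^2([-v_c, v_c])$.

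Next I would invoke Lemma~\ref{lem:symmetry} to conclude that $\langle \hat{C}\psi, \phi \rangle = \langle \psi, \hat{C}\phi \rangle$ for every $\psi, \phi \in D(\hat{C})$, which is by definition formal symmetry. Concatenating these two observations gives the corollary. There is no serious obstacle here: the argument is essentially bookkeeping. The only point I would flag for the reader is a clarification of vocabulary, namely that ``formally symmetric'' in this context means symmetric on the common domain $D(\hat{C})$ and does not yet assert any statement about the adjoint domain $D(\hat{C}^*)$ or about self-adjointness, both of which are deferred to the subsequent analysis of deficiency indices.
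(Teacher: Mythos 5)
Your proposal is correct and matches the paper's own proof, which likewise derives the corollary directly from Lemmas~\ref{lem:linearity} and~\ref{lem:symmetry}. Your added remark distinguishing formal symmetry on $D(\hat{C})$ from any claim about $D(\hat{C}^*)$ is a helpful clarification but does not change the argument.
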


\begin{proof}
Follows directly from Lemmas~\ref{lem:linearity} and~\ref{lem:symmetry}.
\end{proof}

\begin{remark}[Closedness of the Operator]
The domain \( D(\hat{C}) \), equipped with the graph norm \( \|\psi\|_{\text{graph}} := \left( \|\psi\|_{L^2}^2 + \|\hat{C}\psi\|_{L^2}^2 \right)^{1/2} \), is closed in \( L^2([-v_c, v_c]) \). Hence, \( \hat{C} \) is a closed unbounded operator. This ensures the applicability of von Neumann’s theorem; see \cite[Theorem X.2]{reed-simon}, \cite{zettl}.
\end{remark}

\begin{remark}[Spectral Sign Convention]
The second derivative in \( \hat{C} \) appears with a positive sign: \( +\tfrac{d^2}{dv^2} \), in contrast to the conventional Laplacian. As a result, the spectrum of \( \hat{C} \) is strictly bounded above by \( \pi \), with eigenvalues decreasing quadratically to \( -\infty \). This structure reflects the geometric interpretation of \( \hat{C} \) as a deformation-energy operator: the highest eigenvalue \( C_0 \approx \pi \) corresponds to the maximally symmetric configuration, while lower eigenvalues represent increasingly compressed geometric states.
\end{remark}

\subsubsection*{Concluding Note}
The above properties establish that \( \hat{C} \) is a well-defined, closed, and symmetric second-order differential operator on a dense domain. These results justify the application of von Neumann’s theorem and motivate the computation of deficiency indices, which we undertake in the next section.

\section{Deficiency Index Calculation}

We now compute the deficiency indices of the operator \( \hat{C} \) defined by
\begin{equation}
\hat{C} := \pi\left(1 + \frac{\hbar^2}{c^2} \frac{d^2}{dv^2} \right)
\end{equation}
on the domain \( D(\hat{C}) = H^2([-v_c, v_c]) \cap H^1_0([-v_c, v_c]) \subset L^2([-v_c, v_c]) \). The goal is to determine the dimension of the deficiency spaces
\begin{equation}
\mathcal{K}_\pm := \ker\left((\hat{C}^* \mp i\lambda)\right),
\end{equation}
where \( \lambda > 0 \) is a fixed positive real parameter.

\subsection{Form of the Deficiency Equation}

The deficiency equation
\begin{equation}
(\hat{C}^* \mp i\lambda)\psi = 0
\end{equation}
is equivalent to solving the eigenvalue equation with complex spectral parameter
\begin{equation}
\hat{C} \psi = \pm i\lambda \psi,
\end{equation}
that is,
\begin{equation}
\pi\left(1 + \frac{\hbar^2}{c^2} \frac{d^2\psi}{dv^2} \right) = \pm i\lambda \psi.
\end{equation}
Dividing both sides by \( \pi \) and rearranging, we obtain the differential equation
\begin{equation}
\frac{d^2\psi}{dv^2} = \mu^2\, \psi(v), \qquad
\mu^2 := \frac{c^2}{\hbar^2} \left( \frac{\pm i\lambda}{\pi} - 1 \right).
\label{eq:deficiency_ODE}
\end{equation}

\noindent
Since \( \lambda > 0 \), the quantity \( \mu^2 \) has nonzero imaginary part: \( \operatorname{Im}(\mu^2) \neq 0 \). In particular, \( \mu \notin i\mathbb{R} \), so \( \mu \neq 0 \) and the exponential solution is non-degenerate.

\subsection{General Solution and Boundary Conditions}

The general solution of \eqref{eq:deficiency_ODE} is
\begin{equation}
\psi(v) = A\, e^{\mu v} + B\, e^{-\mu v}, \quad A, B \in \mathbb{C},
\end{equation}
with \( \mu \in \mathbb{C} \setminus \mathbb{R} \). Imposing Dirichlet boundary conditions:
\begin{equation}
\psi(-v_c) = 0, \quad \psi(v_c) = 0,
\end{equation}
we obtain the system
\begin{equation}
\begin{cases}
A\, e^{-\mu v_c} + B\, e^{\mu v_c} = 0, \\
A\, e^{\mu v_c} + B\, e^{-\mu v_c} = 0.
\end{cases}
\end{equation}
The determinant of the coefficient matrix is
\begin{equation}
\det \begin{pmatrix}
e^{-\mu v_c} & e^{\mu v_c} \\
e^{\mu v_c} & e^{-\mu v_c}
\end{pmatrix}
= e^{-2\mu v_c} - e^{2\mu v_c} = -2\sinh(2\mu v_c).
\end{equation}
Since \( \mu \notin i\mathbb{R} \), the function \( \sinh(2\mu v_c) \neq 0 \), so the determinant does not vanish.

\subsection{Conclusion via von Neumann's Theorem}

We conclude that the deficiency indices vanish:
\begin{equation}
n_+ = \dim \ker(\hat{C}^* - i\lambda) = 0, \quad
n_- = \dim \ker(\hat{C}^* + i\lambda) = 0.
\end{equation}

By von Neumann’s theorem (see \cite[Thm X.2]{reed-simon}, \cite[§13.3]{moretti}), the operator \( \hat{C} \) is essentially self-adjoint on its domain.

The deficiency indices are independent of the choice of \( \lambda > 0 \), so it suffices to prove \( n_\pm = 0 \) for a single value.

\begin{theorem}[Essential Self-Adjointness of \( \hat{C} \)]
The operator
\begin{equation}
\hat{C} := \pi\left(1 + \frac{\hbar^2}{c^2} \frac{d^2}{dv^2} \right)
\end{equation}
defined on the domain
\begin{equation}
D(\hat{C}) := \left\{ \psi \in H^2([-v_c, v_c]) \,\middle|\, \psi(\pm v_c) = 0 \right\}
\end{equation}
is essentially self-adjoint in the Hilbert space \( L^2([-v_c, v_c]) \); that is, its unique self-adjoint extension coincides with its closure \( \overline{\hat{C}} \).
\end{theorem}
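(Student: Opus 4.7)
The plan is to assemble the three ingredients already in place and then invoke a single classical criterion, namely von Neumann's theorem (Reed--Simon X.2): a closed symmetric operator is essentially self-adjoint if and only if both deficiency indices vanish. Since the bulk of the analytic work has already been carried out, the proof of the theorem itself will be short.

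First, I would recall that by Lemma~\ref{lem:linearity} the operator $\hat{C}$ is linear and densely defined on $D(\hat{C}) = H^2([-v_c,v_c]) \cap H^1_0([-v_c,v_c])$, and by Lemma~\ref{lem:symmetry} it is symmetric there; the Remark on closedness guarantees that $(D(\hat{C}), \|\cdot\|_{\mathrm{graph}})$ is complete, so $\hat{C}$ is a closed symmetric operator. These are precisely the hypotheses needed to apply the deficiency-index criterion.

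Second, I would cite the deficiency computation of Section~3: for any fixed $\lambda > 0$, the relevant ODE $\psi'' = \mu^2 \psi$ with $\mu^2 = (c^2/\hbar^2)\bigl(\pm i\lambda/\pi - 1\bigr)$ has $\operatorname{Im}(\mu^2) \neq 0$, hence $\mu \notin i\mathbb{R}$, so the boundary determinant $-2\sinh(2\mu v_c)$ is nonzero. This forces the only solution of the Dirichlet problem to be $\psi \equiv 0$, whence $n_\pm := \dim \ker(\hat{C}^* \mp i\lambda) = 0$. A brief remark that the indices are independent of the particular $\lambda > 0$ chosen then completes this step. With both indices vanishing, von Neumann's theorem yields essential self-adjointness, and the unique self-adjoint extension is the operator closure $\overline{\hat{C}}$, which is exactly what the theorem asserts.

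The main obstacle in a completely rigorous write-up is subtle and lies entirely in the deficiency step: the space $\mathcal{K}_\pm = \ker(\hat{C}^* \mp i\lambda)$ is defined in terms of $\hat{C}^*$, whose domain is a priori larger than $D(\hat{C})$, so the ODE-with-Dirichlet-BC calculation must be justified as exhausting $\mathcal{K}_\pm$ and not merely its intersection with $D(\hat{C})$. I would address this by (i) invoking one-dimensional elliptic regularity to show that any $\psi \in L^2$ satisfying $\hat{C}^*\psi = \pm i \lambda \psi$ in the distributional sense already lies in $H^2([-v_c,v_c])$, and (ii) using the boundary-pairing identity obtained by integration by parts against test functions $\varphi \in D(\hat{C})$ with arbitrary values of $\varphi'(\pm v_c)$ to conclude $\psi(\pm v_c) = 0$. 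Once this reduction is made, the remainder of the argument is the purely algebraic determinant computation, and the theorem follows immediately by von Neumann's criterion.
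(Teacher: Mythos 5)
Your proposal is correct and follows the same route as the paper: density, symmetry, and closedness of \( \hat{C} \), then vanishing deficiency indices via the explicit ODE \( \psi'' = \mu^2\psi \) and the nonvanishing determinant \( -2\sinh(2\mu v_c) \), and finally von Neumann's criterion. In fact your last paragraph is more careful than the paper's own proof, which silently imposes the Dirichlet conditions on elements of \( \ker(\hat{C}^*\mp i\lambda) \) without justifying that functions in \( D(\hat{C}^*) \) satisfy them; your combination of one-dimensional elliptic regularity with the boundary-pairing argument (testing against \( \varphi \in D(\hat{C}) \) with arbitrary \( \varphi'(\pm v_c) \) to force \( \psi(\pm v_c)=0 \)) supplies exactly the justification the paper omits.
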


\begin{proof}
By direct solution of the deficiency equations \( \hat{C}^* \psi = \pm i\lambda \psi \), we found that the only solution in \( L^2([-v_c, v_c]) \) satisfying the Dirichlet boundary conditions is \( \psi \equiv 0 \). Therefore, the deficiency indices satisfy \( n_+ = n_- = 0 \), and von Neumann’s theorem implies that \( \hat{C} \) is essentially self-adjoint. See \cite[Thm X.2]{reed-simon}, \cite[§13.3]{moretti} for the general result.
\end{proof}

\begin{corollary}
The operator \( \hat{C} \) has a real, discrete spectrum in \( L^2([-v_c, v_c]) \), consisting of simple eigenvalues with smooth eigenfunctions satisfying Dirichlet boundary conditions. This follows from classical Sturm–Liouville theory for regular second-order operators on compact intervals with separated boundary conditions (see, e.g., \cite[Ch.~2, §5]{zettl}).
\end{corollary}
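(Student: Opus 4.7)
The plan is to reduce the eigenvalue problem for $\hat{C}$ to the classical framework of regular Sturm-Liouville theory on a compact interval and then invoke the standard spectral theorem. First I would rewrite the eigenvalue equation $\hat{C}\psi = \mu\psi$ in self-adjoint divergence form $-(p\psi')' + q\psi = \lambda w\psi$ by choosing $p \equiv \pi\hbar^2/c^2 > 0$, $q \equiv 0$, $w \equiv 1$, and introducing the shifted spectral parameter $\lambda := \pi - \mu$. The coefficients are constant, hence $C^\infty$; $p$ and $w$ are strictly positive on $[-v_c, v_c]$; and the Dirichlet conditions $\psi(\pm v_c) = 0$ are manifestly separated. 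The boundary-value problem is therefore regular in the sense of \cite{zettl}, and by the essential self-adjointness already established, $\overline{\hat{C}}$ is the unique self-adjoint realization of this problem.

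Next I would invoke the classical Sturm-Liouville spectral theorem (e.g., \cite[Ch.~2, §5]{zettl}) to conclude that the shifted problem has discrete, real, unbounded spectrum $\lambda_0 < \lambda_1 < \lambda_2 < \cdots \to +\infty$ with each eigenvalue simple; translating via $\mu_n = \pi - \lambda_n$ then yields a strictly decreasing sequence $\mu_n \to -\infty$ bounded above by $\pi$, as anticipated by the spectral-sign remark. Simplicity can also be verified directly: any eigenfunction solves a constant-coefficient second-order linear ODE, so the solution space is two-dimensional, and imposing $\psi(-v_c)=0$ together with a prescribed $\psi'(-v_c)$ determines $\psi$ uniquely up to scale, so each eigenspace is at most one-dimensional. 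Smoothness then follows by an elementary elliptic bootstrap: since $\psi'' = (c^2/\pi\hbar^2)(\mu-\pi)\,\psi$, an eigenfunction $\psi \in H^2$ satisfies $\psi'' \in H^2$, hence $\psi \in H^4$, and iterating gives $\psi \in H^k$ for every $k\in\mathbb{N}$, so $\psi \in C^\infty([-v_c, v_c])$ by Sobolev embedding; the Dirichlet values are preserved throughout, since they are built into $D(\hat{C})$.

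The main obstacle is purely one of bookkeeping: recasting the operator into standard SL form with signs chosen so that the spectral parameter in the cited theorem matches the convention used here, and then quoting a version of the SL spectral theorem that simultaneously delivers reality, discreteness, accumulation only at $-\infty$, and geometric simplicity under separated Dirichlet boundary conditions. No new analytic input beyond the essential self-adjointness already proved is required, so the corollary reduces to a direct translation of a textbook result.
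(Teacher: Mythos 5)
Your proposal is correct and follows exactly the route the paper takes: the paper's entire justification for this corollary is the citation to the regular Sturm--Liouville spectral theorem, and you simply carry out the reduction to standard SL form (with the sign flip $\lambda = \pi - \mu$ accounting for the spectrum being bounded above and accumulating at $-\infty$, consistent with the paper's spectral-sign remark) plus the routine direct checks of simplicity and smoothness. No discrepancy with the paper's argument.
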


\smallskip
\noindent
Moreover, since the domain \( D(\hat{C}) = H^2([-v_c, v_c]) \cap H^1_0([-v_c, v_c]) \) is closed in the graph norm of \( \hat{C} \), the operator is already closed. See \cite[Thm X.2]{reed-simon} for general criteria of closedness for differential operators with Dirichlet boundary conditions.

\section{Conclusion}

The operator \( \hat{C} := \pi\left(1 + \frac{\hbar^2}{c^2} \frac{d^2}{dv^2} \right) \), defined on the Sobolev domain \( D(\hat{C}) = H^2([-v_c, v_c]) \cap H^1_0([-v_c, v_c]) \subset L^2([-v_c, v_c]) \), has been shown to be essentially self-adjoint. The proof proceeds via direct verification of formal symmetry, followed by computation of von Neumann deficiency indices, which vanish identically: \( n_+ = n_- = 0 \). By the classical criterion \cite[Thm X.2]{reed-simon}, \cite[§13.3]{moretti}, this implies that \( \hat{C} \) admits a unique self-adjoint extension. Although the spectrum has not been explicitly calculated here, its qualitative structure is ensured by Sturm–Liouville theory for regular second-order operators with separated Dirichlet boundary conditions on compact intervals: the spectrum is real, discrete, and non-degenerate, with smooth eigenfunctions forming a complete basis \cite[Ch.~2, §5]{zettl}. The operator \( \hat{C} \), derived from a geometric deformation function \( C(v) = \pi(1 - v^2/c^2) \), thus acquires a rigorous analytic foundation, enabling a consistent spectral interpretation of geometric compression. This lays the groundwork for future analysis of the spectral data \( \{C_n\} \), its dynamical evolution, and potential applications in quantized geometry and curvature-driven flows.

\clearpage


\begin{thebibliography}{99}

    \bibitem{alexa2025-flow}
    A. Alexa, \textit{Relativistic Deformation of Geometry through Function C(v): Scalar Deformation Flow and the Geometric Classification of 3-Manifolds}, Preprint (2025), \href{https://doi.org/10.48550/arXiv.2506.01146}{DOI: arXiv:2506.01146v1 }.
    
    \bibitem{reed-simon}
    M. Reed and B. Simon, \textit{Methods of Modern Mathematical Physics, Vol. II: Fourier Analysis, Self-Adjointness}, Academic Press (1975).

    \bibitem{moretti}
    V. Moretti, \textit{Spectral Theory and Quantum Mechanics}, Springer (2017).

    \bibitem{zettl}
    A. Zettl, \textit{Sturm–Liouville Theory}, American Mathematical Society (2005).

    \bibitem{kato}
    T. Kato, \textit{Perturbation Theory for Linear Operators}, Springer (1995).

   \bibitem{rudin}
    W. Rudin, \textit{Functional Analysis}, McGraw-Hill (1991).

    \bibitem{levitan}
    B. M. Levitan and I. S. Sargsjan, \textit{Sturm–Liouville and Dirac Operators}, Springer (1991).





  
  
    

    

    
    \end{thebibliography}
\end{document}